\newcommand{\details}[1]{}
\newtheorem{theorem}{Theorem}[section]
\newtheorem*{theorem*}{Theorem}
\newtheorem*{corollary*}{Corollary}
\newtheorem{lemma}[theorem]{Lemma}
\newtheorem*{claim*}{Claim}
\newtheorem*{lemma*}{Lemma}
\newtheorem{proposition}[theorem]{Proposition}
\newtheorem*{proposition*}{Proposition}
\newtheorem*{conjecture*}{Conjecture}
\newtheorem{def-proposition}[theorem]{Definition-Proposition}
\theoremstyle{definition}
\newtheorem*{definition*}{Definition}
\newtheorem*{example*}{Example}
\numberwithin{equation}{section}
\newcommand{\GG}{\mathbb{G}}
\newcommand{\Gr}{\mathrm{Gr}}
\newcommand{\HH}{\mathrm{H}}
\newcommand{\R}{\mathrm{R}}
\newcommand{\can}{\mathrm{can}}
\newcommand{\pr}{\mathrm{pr}}
\newcommand{\Pic}{\mathrm{Pic}}
\newcommand{\cL}{\mathcal{L}}
\newcommand{\cO}{\mathcal{O}}
\newcommand{\Corr}{\mathrm{\textbf{Corr}}}
\newcommand{\uPic}{\mathrm{\textbf{Pic}}}
\newcommand{\uHom}{\mathrm{\textbf{Hom}}}
\newcommand{\spec}{{\mathrm{Spec}}\,}
\newcommand{\ok}{\overline{k}}
\begin{document}

\title[Divisorial correspondences]
{A note on divisorial correspondences of \\ extensions of abelian schemes by tori}

\author{Cristiana Bertolin}
\address{Dipartimento di Matematica, Universit\`a di Torino, Via Carlo Alberto 10, Italy}
\email{cristiana.bertolin@unito.it}
\author{Federica Galluzzi}
\address{Dipartimento di Matematica, Universit\`a di Torino, Via Carlo Alberto 10, Italy}
\email{federica.galluzzi@unito.it}

\subjclass[2010]{14F22, 16H05 }

\keywords{group schemes, divisorial correspondences}

\dedicatory{Dedicated to M. Raynaud}



\begin{abstract} Let $S$ be a locally noetherian scheme and consider two extensions $G_1$ and $G_2$ of abelian $S$-schemes by $S$-tori. In this note we prove that
the $fppf$-sheaf $\Corr_S(G_1,G_2)$ of divisorial correspondences between $G_1$ and $G_2$ is representable. Moreover, using divisorial correspondences, we show that line bundles on an extension $G$ of an abelian scheme by a torus define group homomorphisms between $G$ and $ \uPic_{ G/S}.$ 
\end{abstract}


\maketitle


\tableofcontents

\section*{Introduction}

In algebraic geometry, the notion of correspondences between varieties plays an important role for the study of algebraic cycles and motives. In this short note we discuss a special case: divisorial correspondences between
 group schemes which are extensions of an abelian
scheme by a torus over a fixed locally Noetherian base $S$.
Let $G$ be such an extension of an abelian $S$-scheme by an $S$-torus.
Denote by $p:G \to S$ its structural morphism.
 The \textbf{relative Picard functor} associated to $G/S$ is the $fppf$-sheaf $\uPic _{G/S} = \R^1p_{*} \GG _m$, i.e. the $fppf$-sheaf associated to the presheaf $T/S \mapsto \Pic(G_T),$ where $\Pic(G_T)$ is the group of isomorphism classes of invertible sheaves on the $T$-scheme $G_T= G \times_S T$ obtained from $G$ by the base change $ T \to S.$

The $S$-group scheme $G$ admits a unit section $\epsilon: S \to G$. From now on, we will assume that the structural morphism $p:G \to S$ satisfies $p_{*}\mathcal O_{G}=\mathcal O_{S}$ universally.  With these hypotheses
 the $fppf$-sheaf $\uPic _{G/S}$ is canonically isomorphic to the \'etale-sheaf $ \R^1p_{*} \GG_m $ and moreover it is canonically isomorphic to the sheaf 
 $\uPic_{G/S}^\epsilon : T/S \mapsto \Pic(G_T) / \Pic (T),$
where $\Pic(G_T) / \Pic (T)$ is the group of isomorphism classes of invertible sheaves $\cL$ on $ G_T$ which are rigidified along the unit section $\epsilon_T : T \to G_T $ obtained from $\epsilon: S \to G$ by the base change $T \to S$, that is it exists an isomorphism between the structural sheaf $\cO_T$ and $\epsilon_T^* \cL.$ We call this isomorphism $\cO_T \cong \epsilon_T^* \cL $ a rigidification of $\cL$ along $\epsilon_T$.

Now consider two extensions $p_1:G_1 \to S$, $p_2:G_2 \to S$ of abelian schemes by tori. Denote by $\epsilon_i: S \to G_i$ their unit sections
 and suppose that $p_i$ satisfy $p_{i_*}\mathcal O_{G_i}=\mathcal O_{S}$ universally for $i=1,2$.  
Consider the canonical morphism of sheaves defined by pull-backs

\begin{equation}\label{cube}
\begin{matrix}
\can: \, \uPic_{ G_1 /S} \times \uPic_{ G_2/S}  & \longrightarrow &  
\uPic_{ G_1 \times _S G_2/S} \\
(\cL_1,\cL_2) &  \longmapsto & pr_1^* \cL_1 \otimes pr_2^* \cL_2
\end{matrix}
\end{equation}
where $\pr_i: G_1 \times_S G_2 \to G_1$ are the projections to the $i$-th factor for $i=1,2$.  The \textbf{sheaf of divisorial correspondences between $G_1$ and $G_2$ over $S$}, that we denote by
$$\Corr _S(G_1,G_2),$$
 is the $fppf$-sheaf cokernel of $\can$ (\ref{cube}). We have an exact sequence of $fppf$-sheaves
 \[\uPic_{ G_1 /S} \times \uPic_{ G_2/S}  \stackrel{\can}{\longrightarrow} \uPic_{ G_1 \times _S G_2/S} \longrightarrow  \Corr_S(G_1,G_2) \longrightarrow 0. \]
 Since the extensions $G_i$ are endowed with the unit sections $\epsilon_i$ and since we have supposed $p_{i_*}\mathcal O_{G_i}=\mathcal O_{S}$ universally, 
 using the rigidified version $\uPic_{G_i/S}^{\epsilon_i}$ of the relative Picard functor 
 we get that for any $S$-scheme $T$ the sequence
 \begin{equation}\label{eq:corr=pic2fattori}
 0 \longrightarrow \uPic_{ G_1 /S}(T) \times \uPic_{ G_2/S}(T) \stackrel{\can}{\longrightarrow} \uPic_{ G_1 \times _S G_2/S}(T) \longrightarrow  \Corr_S(G_1,G_2)(T) \longrightarrow 0,
 \end{equation}
 is exact, that is  $\Corr _S(G_1,G_2)(T)$ is the group of isomorphism classes of the invertible sheaves on $G_{1T}\times_T G_{2T} $ endowed with rigidifications along $\epsilon _1 \times _T G_{2T}$ and along $G_{1T} \times _T \epsilon _{2T} $ which must agree on $\epsilon _{1T} \times _T \epsilon _{2T}$.

 The aim of this note is to prove that the $fppf$-sheaf $\Corr _S(G_1,G_2)$ of divisorial correspondences between $G_1$ and $G_2$ is representable (Theorem \ref{thm:RepCorr}). Moreover, using divisorial correspondences, we show that line bundles on an extension $G$ of an abelian scheme by a torus define group homomorphisms between $G$ and $ \uPic_{ G/S}$ (Proposition \ref{prop}).
 In \cite[Thm 0.1, Thm 5.1]{BB} S. Brochard and the first author construct the morphism defined in (\ref{equ:linebund=morph}) for 1-motives without using divisorial correspondences and they prove the Theorem of the Cube for 1-motives. In \cite[Thm 5.9.]{BG} the authors prove the generalized Theorem of the Cube for 1-motives.

This paper takes the origin from an exchange of emails with M. Raynaud.
We want to thank M. Brion for his comments about the hypothesis ``$p_* \mathcal{O}_G=\mathcal{O}_S$ universally'', we use in this paper.

\section{Representability of $\Corr$}

In \cite[Thm 1]{Murre} Murre gives a criterion for a contravariant functor from the category of schemes over $S$ to the category of sets to be representable by an unramified, separated $S$-scheme which is locally of finite type over $S$. Using this criterion, he proves the representability of the $fppf$-sheaf $\Corr _S(X_1,X_2)$ with $ X_1 $ and $X_2$ proper and flat $S$-schemes 
(see \cite[Thm 4]{Murre}). We adapt his results to extensions of abelian schemes by tori which are not proper.

\begin{theorem}\label{thm:RepCorr}
Consider two extensions $p_1:G_1 \to S$, $p_2:G_2 \to S$ of abelian schemes by tori. Suppose that the structural morphisms $p_i$ satisfy $p_{i_*}\mathcal O_{G_i}=\mathcal O_{S}$ universally for $i=1,2$.  
The $fppf$-sheaf $\mathbf{Corr}_S(G_1,G_2)$ of divisorial correspondences between $G_1$ and $G_2$ is representable by an $S$-group scheme, locally of finite presentation, separated and unramified over $S$.
\end{theorem}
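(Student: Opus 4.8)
The plan is to apply Murre's representability criterion \cite[Thm 1]{Murre}, which characterizes those contravariant functors on $(\mathrm{Sch}/S)$ that are representable by a separated $S$-scheme, unramified and locally of finite type over $S$, through a short list of conditions: being an $fppf$-sheaf, being locally of finite presentation, being formally unramified, satisfying the valuative criterion of separatedness, and an effectivity condition for formal sections. Since in \cite[Thm 4]{Murre} these conditions are verified for $\Corr_S(X_1,X_2)$ when $X_1,X_2$ are proper and flat, my task is to rerun that verification directly for $\Corr_S(G_1,G_2)$, isolating the points where properness intervenes and replacing them by the hypothesis $p_{i_*}\cO_{G_i}=\cO_S$ (universally) together with the group structure of the $G_i$. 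Because $\Corr_S(G_1,G_2)$ is an abelian group functor under tensor product of line bundles, the representing object will automatically be an $S$-group scheme, and Murre's criterion supplies the remaining adjectives.

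Two of the conditions are essentially formal. The $fppf$-sheaf property holds by construction, since $\Corr_S(G_1,G_2)$ is defined as the cokernel of the morphism $\can$ of $fppf$-sheaves in \eqref{cube}. For local finite presentation I would note that $G_1$, $G_2$ and $G_1\times_S G_2$ are of finite presentation over $S$ (an abelian scheme is proper and finitely presented, a torus is affine and finitely presented), so the relative Picard functors appearing in \eqref{eq:corr=pic2fattori} commute with filtered colimits of $\cO_S$-algebras; passing to the cokernel, $\Corr_S(G_1,G_2)$ inherits this property.

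The conceptual heart is the unramifiedness. Given a square-zero $S$-thickening $T_0\hookrightarrow T$, I would show that $\Corr_S(G_1,G_2)(T)\to\Corr_S(G_1,G_2)(T_0)$ is injective by identifying the relative tangent space: the difference of two lifts of a correspondence is classified by $\HH^1(G_1\times_S G_2,\cO)$ modulo the image of $\HH^1(G_1,\cO)\oplus\HH^1(G_2,\cO)$ coming from $\can$. Here the hypothesis $p_{i_*}\cO_{G_i}=\cO_S$ is exactly what makes the K\"unneth formula apply and split as $\HH^1(G_1\times_S G_2,\cO)=\HH^1(G_1,\cO)\oplus\HH^1(G_2,\cO)$, so the quotient vanishes and the tangent space is zero; the rigidifications along the two axes, which must agree on $\epsilon_{1T}\times_T\epsilon_{2T}$, remove the remaining ambiguity. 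Separatedness I would obtain from the valuative criterion: over a discrete valuation ring a rigidified correspondence is determined by its restriction to the generic fibre, because the relative Picard functors are separated and the compatibility of the two rigidifications pins down the class uniquely.

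The main obstacle is the effectivity condition, which in Murre's proper setting rests on Grothendieck's existence theorem and therefore genuinely uses properness of $X_1,X_2$; for the non-proper $G_i$ it must be argued differently. My plan is to exploit the defining extensions $0\to \T_i\to G_i\to A_i\to 0$, with $A_i$ an abelian scheme and $\T_i$ a torus, to reduce the effectivity question to the proper abelian quotients: line bundles on a torus are $fppf$-locally trivial and the toric directions contribute only discrete (\'etale, hence unramified) data through the character lattices, so a compatible family of formal rigidified correspondences on the $G_i$ is controlled by the corresponding family on $A_1\times_S A_2$, where Grothendieck existence applies, together with homomorphism data relating the abelian and toric parts. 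Algebraizing on the abelian part and reassembling the extension structure should then yield an effective correspondence, completing the verification of Murre's hypotheses and hence the proof.
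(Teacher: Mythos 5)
Your overall strategy coincides with the paper's: both proofs run $\Corr_S(G_1,G_2)$ through Murre's criterion \cite[Thm 1]{Murre} and isolate the places where the properness used in \cite[Thm 4]{Murre} must be replaced by the group structure and the hypothesis $p_{i*}\cO_{G_i}=\cO_S$ universally. The formal steps (sheaf property, local finite presentation, reduction to $\uPic_{G_1\times_S G_2/S}$ using the sections $\epsilon_i$) agree, and your K\"unneth computation of the tangent space is essentially the content of the result the paper cites for formal unramifiedness, namely \cite[Prop III 4.1]{Raynaud70} --- although for the non-proper $G_i$ even this step requires knowing that the pushforward of $\cO_{G_i}$ to the abelian quotient decomposes as a sum of algebraically trivial line bundles, in order to control $R^1p_{i*}\cO_{G_i}$ and justify the base-change and K\"unneth identifications you invoke.

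Two steps, however, have genuine gaps. First, your separatedness argument is circular: you reduce the valuative criterion for $\Corr_S(G_1,G_2)$ to the assertion that ``the relative Picard functors are separated,'' but for a non-proper $G$ the separatedness of $\uPic_{G/S}$ over a discrete valuation ring is precisely the kind of statement that must be proved rather than assumed; the paper rests here on \cite[Prop III 4.3]{Raynaud70}, a nontrivial theorem about line bundles on group schemes over a discrete valuation ring. Second, you account for only five of Murre's eight conditions, and the route you propose for the effectivity/algebraization condition --- descending formal correspondences along the torus torsor $G_1\times_S G_2\to A_1\times_S A_2$ and applying Grothendieck existence on the abelian part --- is left entirely as a plan: it would require an explicit description of $\uPic_{G_1\times_S G_2/S}$ in terms of $\uPic_{A_1\times_S A_2/S}$ and the character lattices, compatibly with formal completion, and none of that is carried out. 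The paper takes a different and shorter route at this point: the $G_i$ are $S$-pure by \cite[Prop II 2.4 (2) (i)]{Raynaud70}, hence admit a universal flattening by \cite{StPr}, and Murre's Theorem 2 then transfers the limit condition $(F_3)$ from $\uPic_{G_1\times_S G_2/S}$ to $\Corr_S(G_1,G_2)$, with $(F_7)$ and $(F_8)$ handled as in Murre's original proof. Until the separation and effectivity steps are supplied with actual arguments, your proposal is an outline of the correct strategy rather than a proof.
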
 

\begin{proof}
We have to prove that the functor $\Corr _S(G_1,G_2)$  verifies the properties $(F_1),...,(F_8)$  listed in \cite[Thm 1]{Murre}. Since the structural morphisms $p_1,p_2$ have sections, 
$(F_1),(F_2),(F_4)$ follow from the same properties of $\uPic_{ G_1 \times _S G_2/S}$. 
Concerning property $(F3),$ by \cite[Prop II 2.4 (2) (i)]{Raynaud70} the extension $G_i$ (for $i=1,2$) is $S$-pure
and therefore \cite[Chp 37, Lem 27.6 (2), Def 21.1]{StPr} implies that there exists a universal flattening of $G_i$, that is the flattening functor is representable. Now using \cite[Thm 2]{Murre}, $(F_3)$ follows from the same property of $\uPic_{ G_1 \times _S G_2/S}$.
Property $(F_5)$  (i.e. the fact that $\Corr _S(G_1,G_2)$ is formally unramified) follows by \cite[Prop III 4.1]{Raynaud70}. Property $(F_6)$  (i.e. the fact that $\Corr _S(G_1,G_2)$ is separated) follows by \cite[Prop III 4.3]{Raynaud70}. For Properties $(F_7),(F_8)$ see Murre's proof in \cite[Thm 4]{Murre}.
\end{proof}

The assumption 
\begin{equation}\label{eq:1}
p_{*}\mathcal O_{G}=\mathcal O_{S} \qquad  \mathrm{universally}
\end{equation}
 is not too restrictive. For example, anti-affine algebraic groups over a field $k$, which is not an algebraic extension of a finite field, furnish \textit{non-trivial} examples where the condition (\ref{eq:1}) holds.
More precisely, we have the following geometrical interpretation of the condition (\ref{eq:1}):

\begin{lemma}
Let $k$ be a field and let $\ok$ its algebraic closure.
 Consider an extension $G$ of an abelian variety $A$ by a torus $T$ defined over a field $k$. Let $p:G \to S= \spec (k)$ be the structural morphism of $G$. Denote by $c: X^*(T)(\ok) \to A^* (\ok)$ the $\mathrm{Gal} (\ok/k)$-equivariant homomorphism which defines the extension $G$, where $A^*$ is the dual abelian variety of $A$ and $X^*(T)$ is the character group of the torus $T$. Then the following conditions are equivalent:
\begin{enumerate}
	\item the structural sheaf of the extension $G$ satisfies the condition (\ref{eq:1}),
	\item the $\mathrm{Gal} (\ok/k)$-equivariant homomorphism $c: X^*(T)(\ok) \to A^* (\ok)$ is injective,
	\item the extension $G$ is anti-affine, that is $\mathcal O_{G} (G)=k.$
\end{enumerate}
\end{lemma}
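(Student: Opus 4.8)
The plan is to establish $(1)\Leftrightarrow(3)$ by a direct base-change argument and to treat $(2)\Leftrightarrow(3)$ as the substantial part, via the weight decomposition of the pushforward of $\cO_G$ along the projection $\pi\colon G\to A$. Since all three conditions are insensitive to extending the base field, I would first pass to $\ok$: flat base change along $\spec(\ok)\to\spec(k)$ gives $H^0(G_{\ok},\cO_{G_{\ok}})=\cO_G(G)\otimes_k\ok$, so $\cO_G(G)=k$ holds if and only if $\cO_{G_{\ok}}(G_{\ok})=\ok$, and condition $(2)$ is already phrased over $\ok$.

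For $(1)\Leftrightarrow(3)$: over $S=\spec(k)$ the sheaf $p_*\cO_G$ is the quasi-coherent module attached to the $k$-algebra $\cO_G(G)$, and since $G$ is geometrically integral the canonical map $\cO_S\to p_*\cO_G$ is an isomorphism precisely when $\cO_G(G)=k$, which is $(3)$; thus $(1)$ implies $(3)$ by taking the base change to be the identity. Conversely, any morphism $g\colon S'\to S$ is flat because $S$ is the spectrum of a field, so flat base change yields $(p_{S'})_*\cO_{G_{S'}}=g^*p_*\cO_G=g^*\cO_S=\cO_{S'}$ whenever $(3)$ holds; hence $(3)$ forces the equality $p_*\cO_G=\cO_S$ to persist universally, i.e. $(1)$.

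The heart is $(2)\Leftrightarrow(3)$, which I would prove over $\ok$, where $T$ is split. The projection $\pi\colon G\to A$ is a torsor under $T$, so $\pi_*\cO_G$ carries a $T$-action and decomposes into weight spaces
\[
\pi_*\cO_G=\bigoplus_{\chi\in X^*(T)}L_\chi,
\]
where each $L_\chi$ is a line bundle on $A$. The key point is that the group-extension structure of $G$ forces $L_\chi$ to lie in $\Pic^0(A_{\ok})=A^*(\ok)$ and forces $\chi\mapsto[L_\chi]$ to be a homomorphism; identifying this homomorphism with $c$ by pushing the extension out along $\chi\colon T\to\GG_m$ gives $[L_\chi]=c(\chi)$. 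Taking global sections over $A_{\ok}$ then yields
\[
\cO_{G_{\ok}}(G_{\ok})=H^0(A_{\ok},\pi_*\cO_{G_{\ok}})=\bigoplus_{\chi\in X^*(T)}H^0(A_{\ok},L_\chi).
\]

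To finish I would invoke the standard vanishing on abelian varieties: for $L\in\Pic^0(A_{\ok})$ one has $H^0(A_{\ok},L)=\ok$ when $L\cong\cO_{A_{\ok}}$ and $H^0(A_{\ok},L)=0$ otherwise. Since $L_\chi\cong\cO_{A_{\ok}}$ exactly when $c(\chi)=0$, this collapses the sum to $\bigoplus_{\chi\in\ker c}\ok=\ok[\ker c]$, the group algebra of the lattice $\ker c$; hence $\cO_{G_{\ok}}(G_{\ok})=\ok$ if and only if $\ker c=0$, which is $(2)\Leftrightarrow(3)$. I expect the main obstacle to be the step identifying the weight line bundles with $c$: one must check that the classes $[L_\chi]$ are translation-invariant (so that they land in $\Pic^0$) and that they assemble into the homomorphism $c$, which is exactly where the group law of $G$ is used rather than the mere $T$-torsor structure over $A$. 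Alternatively, one can short-circuit this by citing the structure theory of anti-affine semi-abelian varieties, which records precisely that such a $G$ is anti-affine if and only if $c$ is injective.
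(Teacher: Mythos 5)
Your proof is correct, and it rests on the same two key ingredients as the paper's --- the weight decomposition $\alpha_*\cO_{G_{\ok}}=\bigoplus_{x\in X^*(T)}\cL_x$ with $[\cL_x]=c(x)\in\Pic^0(A_{\ok})$, and the vanishing $\HH^0(A_{\ok},\cL)=0$ for a nontrivial algebraically trivial line bundle --- but it distributes them differently among the implications. The paper quotes Brion's Proposition 2.1 for $(2)\Leftrightarrow(3)$ outright, proves $(1)\Rightarrow(3)$ by evaluating $p_*\cO_G$ at $\spec(k)$, and then spends the weight decomposition on $(3)\Rightarrow(1)$. You instead dispose of $(1)\Leftrightarrow(3)$ by pure flat base change --- every scheme over a field is flat over it, so $p_*\cO_G=\cO_S$ holds universally as soon as it holds over $S$ itself, i.e.\ as soon as $\cO_G(G)=k$ --- and then use the weight decomposition to prove $(2)\Leftrightarrow(3)$ directly, which amounts to unfolding the proof of the result the paper cites. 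Your treatment of $(1)\Leftrightarrow(3)$ is in fact the more careful one: the paper's argument for $(3)\Rightarrow(1)$ only exhibits the equality $p_*\cO_{G_{\ok}}=\cO_{\spec(\ok)}$ after base change to $\ok$ and leaves the word ``universally'' implicit, whereas your flat-base-change remark is exactly what justifies it. The one step you rightly flag as delicate --- that each weight space $\cL_x$ is a line bundle lying in $\Pic^0(A_{\ok})$ and that $x\mapsto[\cL_x]$ coincides with the classifying homomorphism $c$ obtained by pushing out the extension along characters (up to a harmless sign convention that does not affect $\ker c$) --- is precisely the content of the decomposition recorded in Brion's (2.2), so either proving it as you sketch or citing it closes the argument.
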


\begin{proof} The equivalence between (2) and (3) is given by \cite[Prop 2.1]{Br09}.
If (1) holds, we have that $\mathcal O_{G}(G)=p_{*}\mathcal O_{G} (\spec (k) ) =\mathcal O_{S}(\spec (k)) =k$, i.e. the extension $G$ is anti-affine.
Suppose now that (3) holds. Denote by $\alpha: G \to A$ the surjective morphism of algebraic groups underlying the extension $G$ and by $q: A \to S$ the structural morphism of $A$. 
Let $G_{\ok}$ the extension obtained from $G$ extending the scalars from $k$ to $\ok$.
As observed in \cite[(2.2)]{Br09} $\alpha_*(\mathcal O_{G_{\ok}})= \bigoplus_{x \in X^*(T)} {\cL}_x$ where ${\cL}_x$ is the invertible sheaf on $A_{\ok}$ algebraically equivalent to 0, which corresponds to the  point $c(x)$ of $A_{\ok}^*(\ok)$ via the isomorphism $A^* \cong 
\uPic_{ A/S}^0$. Hence 
\[ p_{*}\mathcal O_{G_{\ok}} = q_* \;  \alpha_* \mathcal O_{G_{\ok}} = q_* \bigoplus_{x \in X^*(T)} {\cL}_x .\]
Since $G$ is anti-affine, 
$\HH^0(A_{\ok}, {\cL}_x )=0$ for all $x \not= 0$. Therefore
\[ p_{*}\mathcal O_{G_{\ok}} ( \spec (k) )
= \bigoplus_{x \in X^*(T)} \HH^0(A_{\ok}, {\cL}_x  ) = \HH^0(A_{\ok}, {\cL}_0  ) = \mathcal O_{A_{\ok}} ( A_{\ok} ) =k,\]
that is $ p_{*}\mathcal O_{G_{\ok}} = \mathcal O_{S}.$
\end{proof}

Consider an extension $G$ of an abelian variety $A$ by a torus $T$ defined over a field $k$. As before denote by $c: X^*(T)(\ok) \to A^* (\ok)$ the $\mathrm{Gal} (\ok/k)$-equivariant homomorphism which defines this extension $G$. Let $X''$ be the biggest $\mathrm{Gal} (\ok/k)$-sub-module of $X^*(T)(\ok)$ whose image via $c: X^*(T)(\ok) \to A^* (\ok)$ is a torsion subgoup of $A^* (\ok).$ Denote by $T''$ the quotient torus of $T$ whose character group is $X''$. Then
 $G$ is an extension of the torus $T''$ by an extension $G'$ of A by $T/T''$
\[ 0 \longrightarrow G' \longrightarrow G \longrightarrow T'' \longrightarrow 0.\]
Now the $\mathrm{Gal} (\ok/k)$-equivariant homomorphism $X^*(T/T'')(\ok) \to A^* (\ok)$ defining the extension $G'$ is injective, and therefore, by the above Lemma, $G'$ is anti-affine, 
that is the global functions of $G'$ are $k$, or equivalently for $G'$ the condition (\ref{eq:1}) holds. 
Since the torus $T''$ plays no role for the study of divisorial correspondences, we have showed that over a field we can always reduce to the case where condition (\ref{eq:1}) holds.

\section{Linear morphisms via divisorial correspondences}

Consider two extensions $p_1:G_1 \to S$, $p_2:G_2 \to S$ of abelian schemes by tori.  Denote by $\epsilon_i: S \to G_i$ their unit sections and suppose that the structural morphisms $p_i$ satisfy $p_{i_*}\mathcal O_{G_i}=\mathcal O_{S}$ universally for $i=1,2$.
Let $\uHom_{\epsilon_1} (G_1, \uPic_{G_2/S})$ be the sheaf of morphisms of sheaves from $G_1$ to $\uPic_{G_2/S}$ which send the unit section $\epsilon_1$ to the unit section of $\uPic_{G_2/S}$  and likewise for $\uHom_{\epsilon_2} (G_2, \uPic_{G_1/S})$.
Observe that for any $S$-scheme $T$, $\uHom_{\epsilon_1} (G_1, \uPic_{G_2/S})(T)$ is just the group $\uPic_{G_{2T}/S}(G_{1T}) $ of $G_{1T}$-points of $\uPic_{G_{2T}/S}$. Then by the short exact sequence (\ref{eq:corr=pic2fattori}) we have that 
 the $fppf$-sheaf $T/S \mapsto \uHom_{\epsilon_{1T}} (G_{1T}, \uPic_{G_{2T}/S})$ is isomorphic to the $fppf$-sheaf $T/S \mapsto \Corr _S(G_1,G_2)(T).$
 Therefore we have the isomorphisms of $fppf$-sheaves
 \begin{equation} \label{eq:corr=arrow}
 \uHom_{\epsilon_1} (G_1, \uPic_{G_2/S}) \cong \Corr _S(G_1,G_2) \cong \uHom_{\epsilon_2} (G_2, \uPic_{G_1/S})
\end{equation}

We define by $\mathbf{Hom}_{\Gr}(X,Y)$ the sheaf of group homorphisms between abelian sheaves.

\begin{proposition}\label{prop}
Let $S$ be a normal scheme. Consider two extensions $p_1:G_1 \to S$, $p_2:G_2 \to S$ of abelian schemes by tori.  Suppose that the structural morphisms $p_i$ satisfy $p_{i_*}\mathcal O_{G_i}=\mathcal O_{S}$ universally for $i=1,2$. Then 
\begin{equation}\label{equ:homo}
\mathbf{Hom}_{\Gr}(G_1,\mathbf{Pic}_{G_2/S}) \cong \mathbf{Corr} _S(G_1,G_2) \cong \mathbf{Hom}_{\Gr}(G_2,\mathbf{Pic}_{G_1/S}).
\end{equation}
In particular, if $G$ is an extension of an abelian scheme by a torus over a normal base scheme, this yields a morphism of $fppf$-sheaves
\begin{equation}\label{equ:linebund=morph} 
\mathbf{Pic}_{G/S} \longrightarrow \mathbf{Hom}_{\Gr}(G,\mathbf{Pic}_{G/S}).
\end{equation}
\end{proposition}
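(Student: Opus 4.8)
The plan is to deduce (\ref{equ:homo}) from the isomorphisms of $fppf$-sheaves already recorded in (\ref{eq:corr=arrow}). Since a group homomorphism sends the unit section to the unit section, there are natural inclusions $\mathbf{Hom}_{\Gr}(G_1,\uPic_{G_2/S}) \subseteq \uHom_{\epsilon_1}(G_1,\uPic_{G_2/S})$ and $\mathbf{Hom}_{\Gr}(G_2,\uPic_{G_1/S}) \subseteq \uHom_{\epsilon_2}(G_2,\uPic_{G_1/S})$, so by (\ref{eq:corr=arrow}) the whole content of (\ref{equ:homo}) is the reverse inclusion: \emph{every pointed morphism of sheaves $f\colon G_1 \to \uPic_{G_2/S}$ is automatically additive}. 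First I would observe that additivity is a closed condition: it asks that the two morphisms $f\circ m$ and $(f\circ\pr_1)\cdot(f\circ\pr_2)$ from $G_1\times_S G_1$ to $\uPic_{G_2/S}$ coincide, where $m$ is the group law of $G_1$; these are two sections of $\uHom_{\epsilon_1}(G_1\times_S G_1,\uPic_{G_2/S}) \cong \Corr_S(G_1\times_S G_1,G_2)$, which is separated over $S$ by Theorem \ref{thm:RepCorr}. Hence $\mathbf{Hom}_{\Gr}(G_1,\uPic_{G_2/S})$ is a \emph{closed} subsheaf of $\Corr_S(G_1,G_2)$, and it remains to show that this closed subsheaf is everything.

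The key step translates additivity into a triviality statement about a line bundle, via the dictionary of (\ref{eq:corr=arrow}): a pointed morphism $f$ corresponds to a divisorial correspondence $\cL$ on $G_1\times_S G_2$, rigidified along $\epsilon_1\times_S G_2$ and along $G_1\times_S\epsilon_2$. Additivity of $f$ is then equivalent to the triviality, as a rigidified line bundle, of the theorem-of-the-cube bundle
\[
\Lambda \;=\; (m\times\mathrm{id})^*\cL \;\otimes\; \pr_{13}^*\cL^{-1} \;\otimes\; \pr_{23}^*\cL^{-1}
\]
on $G_1\times_S G_1\times_S G_2$. I would then check that the rigidifications of $\cL$ force $\Lambda$ to be trivial on each of the three slabs: on $\epsilon_1\times_S G_1\times_S G_2$ and on $G_1\times_S\epsilon_1\times_S G_2$ the factor $\pr_{13}^*\cL^{-1}$ (resp. $\pr_{23}^*\cL^{-1}$) restricts to the rigidification of $\cL$ along $\epsilon_1\times_S G_2$, while the remaining two factors cancel as $\cL\otimes\cL^{-1}$; on $G_1\times_S G_1\times_S\epsilon_2$ all three factors are pullbacks of the rigidification of $\cL$ along $G_1\times_S\epsilon_2$, hence trivial.

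I would now conclude that $\Lambda$ is itself trivial by invoking the theorem of the cube, and this is the step I expect to be the main obstacle. The classical theorem of the cube requires the factors to be proper, whereas $G_1$ and $G_2$ are not proper over $S$; here properness must be replaced by the hypothesis $p_{i_*}\cO_{G_i}=\cO_S$ universally, that is by the anti-affineness established in the Lemma above (this is exactly what excludes pathologies such as pointed non-homomorphisms $\GG_a\to\GG_m$, the source $\GG_a$ failing $p_*\cO=\cO$). Concretely, since $\mathbf{Hom}_{\Gr}(G_1,\uPic_{G_2/S})$ is the closed subsheaf of $\Corr_S(G_1,G_2)$ on which $\Lambda$ is trivial, it suffices to verify the vanishing on the geometric fibres over $S$; there $S=\spec(\ok)$ is an algebraically closed field, and one combines the classical theorem of the cube for the abelian quotients with the rigidity of characters for the toric parts (equivalently, the generalized theorem of the cube for semi-abelian varieties of \cite[Thm 5.1]{BB}, \cite[Thm 5.9]{BG}). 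Normality of $S$ is what lets this fibral verification propagate to all of $\Corr_S(G_1,G_2)$: being reduced, $S$ carries no extra infinitesimal or embedded information, so a closed subgroup sheaf of the separated $S$-group scheme $\Corr_S(G_1,G_2)$ that meets every geometric fibre in the full fibre is the whole sheaf. The same argument applied to $G_2$ gives the second isomorphism in (\ref{equ:homo}).

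Finally, for the ``in particular'' statement I would specialise to $G_1=G_2=G$ and construct the morphism (\ref{equ:linebund=morph}) explicitly through the Mumford bundle. To a line bundle $\cL$ on $G$, i.e.\ a section of $\uPic_{G/S}$, one associates $m^*\cL\otimes\pr_1^*\cL^{-1}\otimes\pr_2^*\cL^{-1}$ on $G\times_S G$; restricting along $G\times_S\epsilon$ and $\epsilon\times_S G$ and cancelling as above shows that it is rigidified along both axes, so it defines an element of $\Corr_S(G,G)$, functorially and linearly in $\cL$. Composing the resulting morphism $\uPic_{G/S}\to\Corr_S(G,G)$ with the isomorphism $\Corr_S(G,G)\cong\mathbf{Hom}_{\Gr}(G,\uPic_{G/S})$ furnished by (\ref{equ:homo}) yields the desired morphism of $fppf$-sheaves $\uPic_{G/S}\to\mathbf{Hom}_{\Gr}(G,\uPic_{G/S})$.
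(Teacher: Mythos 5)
Your reduction of (\ref{equ:homo}) to the claim that every pointed morphism of sheaves $u\colon G_1\to\uPic_{G_2/S}$ is automatically additive, your translation of additivity into the triviality of the cube bundle $\Lambda=(m\times\mathrm{id})^*\cL\otimes\pr_{13}^*\cL^{-1}\otimes\pr_{23}^*\cL^{-1}$ on $G_1\times_S G_1\times_S G_2$, and your check that $\Lambda$ is rigidified along the three slabs $\epsilon_1\times_S G_1\times_S G_2$, $G_1\times_S\epsilon_1\times_S G_2$ and $G_1\times_S G_1\times_S\epsilon_2$ all coincide with the paper's argument, which phrases the same bundle as the morphism $v(g,g')=u(g+g')-u(g)-u(g')$. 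Your construction of (\ref{equ:linebund=morph}) via the Mumford bundle $m^*\cL\otimes\pr_1^*\cL^{-1}\otimes\pr_2^*\cL^{-1}$ is also the intended one (and is in fact more precise than the composite displayed in the paper, which, read literally, factors the cokernel projection through $\can$ and would be the zero map).

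The divergence, and the genuine gap, is at the step you yourself flag as the main obstacle: concluding that $\Lambda$ is trivial. The paper disposes of this in one line by citing Moret-Bailly's Theorem of the Cube for extensions of abelian schemes by tori over a \emph{normal} base, \cite[Chp I, \S 2.6]{MB85}; that citation is the sole reason the proposition assumes $S$ normal. Your substitute --- verify triviality on geometric fibres and propagate using separatedness and unramifiedness of $\Corr$ --- is not a proof as written. First, the fibrewise input (the theorem of the cube for semi-abelian varieties over an algebraically closed field, with the prescribed rigidifications) is itself a nontrivial theorem that you only gesture at. Second, and more seriously, your propagation step makes no essential use of normality: an open-and-closed locus of $S$ containing every point is all of $S$ whether or not $S$ is normal, so if the argument worked as stated it would establish the theorem of the cube over an arbitrary base. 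That cannot be right for non-proper group schemes: over a non-seminormal base there exist line bundles on products with $\GG_m$ (or with the affine line) that are trivial on every geometric fibre yet nontrivial modulo pullbacks from the base, and ruling out such classes --- or showing that they die in the $fppf$ sheaf $\Corr$ --- is exactly the content that normality buys in Raynaud's and Moret-Bailly's work. So you have correctly located the crux but not crossed it; the intended move is simply to quote \cite{MB85} at that point.
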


\begin{proof} Because of (\ref{eq:corr=arrow}), it is enough to show that if $u: G_1 \to \uPic _{G_2/S}$ is a morphism of sheaves which sends the unit section  $\epsilon_1: S \to G_1$ of $G_1$  to the unit section of $\uPic _{G_2/S}$, then $u$ is in fact a group homomorphism. We will prove that the following morphism of sheaves
\[
\begin{matrix}
v: \,  G_1 \times _S  G_1  & \longrightarrow &  
\uPic_{G_2 /S} \\
(g,g') &  \longmapsto & u(g+g') -u(g) - u(g')
\end{matrix}
\]
is the null morphism. The morphism $v$ is a $G_1 \times _S G_1$-point of $\uPic_{G_2/S}$, that is an invertible sheaf $\cL$ on $G_1 \times _S G_1 \times _S G_2$ that we can suppose to be rigidified along the unit section $\epsilon_2 : S \to G_2$ of $G_2$. Since by hypothesis $u(0)=0,$ the restriction of $\cL$ to $G_1 \times _S S \times _S  G_2$ 
 and to $S \times _S G_1 \times _S  G_2$ is trivial. Therefore $\cL$ is rigidified along $\epsilon _1 \times _S G_1 \times _S G_2 $ , $G_1 \times _S \epsilon _1 \times _S G_2 $ and $G_1 \times _S G_1 \times _S \epsilon _2$. But by \cite[Chp I, \S 2.6]{MB85}, the extensions $G_i$ over a normal base scheme satisfy 
the Theorem of the Cube, and so the line bundle $\cL$ is trivial, that is $v$ is the null morphism.

Now let $G$ be an extension of an abelian scheme by a torus. Consider the canonical morphisms of $fppf$-sheaves
\[
\uPic  _{G/S} \to  \uPic_{G/S} \times _S \uPic_{G/S} \stackrel{\can}{\longrightarrow} \uPic_{G \times _S G}  \to \Corr _S (G,G). 
\]
Using (\ref{equ:homo}) we get the expected morphism of $fppf$-sheaves. 
\end{proof}

\bibliographystyle{plain}

\begin{thebibliography}{10}


\bibitem{BB} 
C. Bertolin, S. Brochard.
\newblock Morphisms of 1-motives defined by line bundles.
\newblock Int. Math. Res. Not. IMRN 2019 (2019), no.5, 1568--1600.

\bibitem{BG} 
C. Bertolin, F. Galluzzi.
\newblock Gerbes and Brauer groups over stacks.
\newblock In: arXiv:1705.0138.

\bibitem{Br09} 
M. Brion.
\newblock Anti-affine Algebraic Groups.
\newblock  J. Algebra 321 (2009), no. 3, 934--952.



\bibitem{Kl05}
S. Kleiman.
\newblock The Picard scheme.
\newblock In Fundamental Algebraic Geometry, Mathematical Surveys and Monographs, 123. American Mathematical Society, Providence, RI, (2005), 235--321.


\bibitem{MB85} 
L. Moret-Bailly.
\newblock Pinceaux de vari\'et\'es ab\'eliennes.
\newblock  Ast\'erisque, 129. Publi\'e avec le concours du Centre National de la Recherche Scientifique. Paris: Soci\'et\'e Math\'ematique de France, 1985.

\bibitem{Murre}
J.-P. Murre.
\newblock Representation of unramified functors. Applications (according to unpublished results of A. Grothendieck). 
\newblock S\'eminaire Bourbaki, Vol. 9, \'Exp. No. 294, 243--261, Soc. Math. France, Paris, 1995. 


\bibitem{Raynaud70}
M. Raynaud.
\newblock Faisceaux amples sur les sch\'emas en groupes et les spaces homog\`enes.
\newblock Lecture Notes in Math, 119. Springer-Verlag, Berlin, 1970. 

\bibitem{StPr}
Stack Project
\newblock https://stacks.math.columbia.edu/
\end{thebibliography}

\end{document}